\theoremstyle{plain}
\newtheorem{theorem}{Theorem}[section]
\newtheorem{lemma}[theorem]{Lemma}
\newtheorem{corollary}[theorem]{Corollary}
\theoremstyle{definition}
\theoremstyle{remark}
\newtheorem{remark}[theorem]{Remark}
\numberwithin{equation}{section}
\def\R{\mathbb{R}}
\def\C{\mathbb{C}}
\newcommand{\comment}[1]{}
\newcommand{\eq}{\begin{equation}}
\newcommand{\en}{\end{equation}}
\def\N{\mathbb{N}}
\begin{document}
\title{Universality of Correlations for Random Analytic Functions}

\author{Shannon Starr}
\address{Department of Mathematics, University of Rochester, Rochester, NY 14627, USA.}

\subjclass[2000]{Primary 30B20, 60B12, 60G15 }

\date{July 19, 2011}

\begin{abstract}

We review a  result obtained with Andrew Ledoan and Marco Merkli.
Consider a random analytic function $f(z) = \sum_{n=0}^{\infty} a_n X_n z^n$,
where the $X_n$'s are i.i.d., complex valued random variables with mean zero and unit variance,
and the coefficients $a_n$ are non-random and chosen so that the variance transforms covariantly
under conformal transformations of the domain.
If the $X_n$'s are Gaussian, this is called a Gaussian analytic function (GAF).
We prove that, even if the coefficients are not Gaussian, the zero set converges in distribution
to that of a GAF near the boundary of the domain.

\end{abstract}

\maketitle

\section{Introduction}

Random polynomials and random analytic functions are a topic of classical interest.
They have enjoyed renewed interest in the past decade or so.
One can imagine various motivations.
The characteristic polynomial of a random matrix is a random polynomial.
Similarly, the partition function for a Bernoulli spin glass may be viewed as a random analytic function
in the variable $z=e^{\beta}$.
This is related to the celebrated Lee-Yang theorem.

However, in mathematical study, it is easier to consider a different type of random analytic functions, where
the coefficients are independent.
When the coefficients are i.i.d., the expected number of zeros has been well-studied.
Consider for example the random analytic function
$$
f(z)\, =\, \sum_{n=0}^{\infty} X_n z^n\, ,
$$
where the coefficients are i.i.d.
Then under general conditions, the zero set accumulates at the unit circle.
A recent result \cite{IZ} has found the sharp condition for the zero
set to be asymptotically uniformly distributed on the circle.
This type of result is akin to a law of large number: for example Ibragimov and Zaporozhets
prove that the empirical distribution of the zeros converges, almost surely, to the uniform
distribution on the circle.

Various researchers have also considered the correlations between the zeros, under the assumption
that the coefficients are all Gaussian.
For instance, a recent monograph of Hough, Krishnapur, Peres and Vir\'ag reviews this topic \cite{HKPV}.
A natural question arises: if the zeros of a random analytic function accumulate near the boundary,
even under the assumption that the coefficients are not Gaussian,
what can we say about the correlations of the zeros near the boundary?
For general reasons, one expects the distribution of the zeros to converge back to the distribution of the zeros
of the Gaussian analytic function, when properly rescaled.
That is what we prove.

One would expect to have various applications of such a result.
However, some desired applications are presenly out of reach.
The polynomials corresponding to random matrices and spin glasses
generally do not have i.i.d. coefficients, except for companion matrices.
There has been some recent interest in polynomials with discrete random coefficients.
We will review this topic in the last section.

\section*{Acknowledgments}

This paper reviews research done in collaboration with Marco Merkli and Andrew Ledoan.
I am most grateful to them.

\section{Set-up}

We consider a special ensemble of random analytic functions, inspired by the monograph \cite{HKPV}.
Given a parameter $\kappa \leq 0$, and a 
sequence of coefficients
$\boldsymbol{x} = (x_0,x_1,x_2,\dots)$,
one may define the power series
$$
f_\kappa(\boldsymbol{x},z)\, =\, \sum_{n=0}^{\infty} a_{n,\kappa} x_n z^n\, ,
$$
where
$$
a_{n,\kappa}\, =\, \prod_{j=1}^{n} \left[\frac{1 -(j-1)\kappa}{j}\right]^{1/2}\, .
$$
We consider random analytic functions (RAF's) defined by choosing a coefficient
sequence $\boldsymbol{X} = (X_0,X_1,\dots)$ where $X_0, X_1, \dots$ are i.i.d.,
complex valued random variables, with mean zero and unit variance, such that
\begin{equation}
\label{eq:unitvariance}
{\bf E}\big[ (\textrm{Re}[X_i])^2\big]\, 
=\, {\bf E} \big[ (\textrm{Im}[X_i])^2\big]\, ,\quad
{\bf E} \textrm{Re}[X_i] \textrm{Im}[X_i]\, =\, 0\, .
\end{equation}
A number of important properties hold for these models, which we describe now.
An excellent reference, with complete proofs is \cite{HKPV}.

We write $\mathbb{U}(z,r)$ for the open disk $\{w\in \C\, :\, |w-z|<r\}$.
The natural domain of convergence for $f_{\kappa}(\boldsymbol{X},z)$ is $\mathbb{U}(0,\rho_\kappa)$,
a.s., where $\rho_{\kappa} = |\kappa|^{-1/2}$.
By (\ref{eq:unitvariance}), 
${\bf E}[f_{\kappa}(\boldsymbol{X},z) f_{\kappa}(\boldsymbol{X},w)]\, =\, 0$, and
\begin{equation}
\label{eq:covariance}
{\bf E}[f_{\kappa}(\boldsymbol{X},z) \overline{f_{\kappa}(\boldsymbol{X},w)}]\, =\, Q_{\kappa}(z,w)\, :=\,
\begin{cases} (1+\kappa z \overline{w})^{1/\kappa} & \text { for $\kappa\neq 0$,}\\
e^{z \overline{w}} & \text { for $\kappa = 0$.}
\end{cases}
\end{equation}
The function $Q_{\kappa}$ possesses important symmetries. For $|u|<\rho_{\kappa}$, consider the M\"obius transformation
$$
\Phi_{\kappa}^{u}(z)\, =\, \frac{z-u}{1+\kappa \overline{u} z}\, ,
$$
which is a univalent mapping of $\mathbb{U}(0,\rho_{\kappa})$ to itself.
This is an isometry relative to a metric with Gauss curvature $4\kappa$.
Moreover,
$$
Q_{\kappa}\left(\Phi_{\kappa}^u(z),\, \Phi_{\kappa}^u(w)\right)\, =\, \Delta_{\kappa}^u(z) \overline{\Delta_{\kappa}^u(w)} Q_{\kappa}(z,w)\, ,
$$
where
$$
\Delta_{\kappa}^u(z)\, 
=\, \begin{cases}
(1 + \kappa |u|^2)^{1/(2\kappa)} (1+\kappa \overline{u} z)^{-1/\kappa} & \text { for $\kappa\neq 0$,}\\
\exp(\frac{1}{2} |u|^2 - \overline{u} z) & \text { for $\kappa = 0$.}
\end{cases}
$$
While $Q_{\kappa}$ is not {\em invariant} with respect to the isometries $\Phi_{\kappa}^u$,
one says it is {\em covariant} because of this property.
Also note that for any $z \in \mathbb{U}(0,\rho_{\kappa})$, 
$$
|\Phi_{\kappa}^u(z)| \to \rho_{\kappa}\quad \text { as }\quad |u| \to \rho_{\kappa}\, .
$$
Taking $u$ to the boundary of the domain $\mathbb{U}(0,\rho_{\kappa})$,
conformally maps neighborhoods of $0$ to domains approaching the boundary.

Gaussian analytic functions (GAF's) are important special cases of RAF's.
Their zero sets have been studied in \cite{HKPV}, and many interesting questions
about these zero sets continue to be studied.
The reader may consult that reference and references therein.
Our main result proves convergence in distribution of the zero sets of the RAF's,
for a sequence of neighborhoods converging to the boundary.

\begin{theorem}[Main Result]
\label{thm:main}
Suppose that $X_0,X_1,\dots$ are i.i.d., complex-valued random variables with mean zero
and satisfying (\ref{eq:unitvariance}). 
Let $\boldsymbol{Z} = (Z_0,Z_1,\dots)$ be i.i.d., complex Gaussians with density $\pi^{-1} e^{-|z|^2/2}$
on the complex plane.
For each $\kappa\leq 0$, and any continuous function $\varphi$ whose support is a compact subset of $\mathbb{U}(0,\rho_{\kappa})$,
the random variables
$$
\sum_{\xi\, :\, f_{\kappa}(\boldsymbol{X},\xi)=0} \varphi(\Phi_{\kappa}^{u}(\xi))
$$
converge in distribution, in the limit $|u| \to \rho_{\kappa}$, to the random variable
$$
\sum_{\xi\, :\, f_{\kappa}(\boldsymbol{Z},\xi)=0} \varphi(\xi)\, .
$$
\end{theorem}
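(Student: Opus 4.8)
The plan is to exploit the conformal covariance of $Q_\kappa$ to reduce the theorem to a central limit theorem for random analytic functions, and then to read off the statement about zero sets from the continuous mapping theorem. First I would note that $(\Phi_\kappa^u)^{-1}=\Phi_\kappa^{-u}$ and introduce the random analytic function
$$
g_u(w)\ =\ \big[\Delta_\kappa^{-u}(w)\big]^{-1}\,f_\kappa(\boldsymbol{X},\Phi_\kappa^{-u}(w)),\qquad w\in\mathbb{U}(0,\rho_\kappa),
$$
which is a.s.\ a well-defined element of the space $H(\mathbb{U}(0,\rho_\kappa))$ of analytic functions with the topology of local uniform convergence. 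Because $\Delta_\kappa^{-u}$ is zero-free and $\Phi_\kappa^{-u}$ is a univalent self-map of $\mathbb{U}(0,\rho_\kappa)$, the zero set of $g_u$, counted with multiplicity, is the image under $\Phi_\kappa^u$ of the zero set of $f_\kappa(\boldsymbol{X},\cdot)$, so the random variable in the statement equals $\sum_{w:\,g_u(w)=0}\vp(w)$. Applying the covariance identity for $Q_\kappa$ with $u$ replaced by $-u$ gives $\E[g_u(w)\ol{g_u(w')}]=Q_\kappa(w,w')$ and $\E[g_u(w)g_u(w')]=0$, so $g_u$ has exactly the covariance structure of the GAF $f_\kappa(\boldsymbol{Z},\cdot)$ — and, crucially, this structure does not depend on $u$. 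Since $f_\kappa(\boldsymbol{Z},\cdot)$ is a.s.\ not identically zero (indeed $f_\kappa(\boldsymbol{Z},0)=Z_0\ne 0$ a.s.), the map sending an analytic function to its zero set (counted with multiplicity) is continuous on $H(\mathbb{U}(0,\rho_\kappa))\setminus\{0\}$ into the space of locally finite point measures with the vague topology (see \cite{HKPV}), and $\vp$ is continuous with compact support, it suffices to prove that $g_u$ converges in distribution to $f_\kappa(\boldsymbol{Z},\cdot)$ in $H(\mathbb{U}(0,\rho_\kappa))$ as $|u|\to\rho_\kappa$; the conclusion then follows by the continuous mapping theorem.

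For this functional convergence I would establish tightness together with convergence of finite-dimensional distributions. Writing $g_u(w)=\sum_{n\ge 0}X_n\psi_n^u(w)$ with $\psi_n^u(w)=a_{n,\kappa}\big[\Delta_\kappa^{-u}(w)\big]^{-1}\Phi_\kappa^{-u}(w)^n$, one has $\sum_n|\psi_n^u(w)|^2=Q_\kappa(w,w)$ for every $u$, so $\E|g_u(w)|^2=Q_\kappa(w,w)$ is locally bounded uniformly in $u$; combined with subharmonicity of $|g_u|$ and the sub-mean-value inequality this gives $\E\sup_{|w|\le r}|g_u(w)|\le C_r$ uniformly in $u$, hence tightness in $H(\mathbb{U}(0,\rho_\kappa))$ by Montel's theorem. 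The crux is the negligibility estimate
$$
m_u(w)\ :=\ \max_{n\ge 0}|\psi_n^u(w)|\ \longrightarrow\ 0\qquad(|u|\to\rho_\kappa),
$$
locally uniformly in $w$. I would deduce it from the identity $\frac{|\psi_n^u(w)|^2}{Q_\kappa(w,w)}=\frac{a_{n,\kappa}^2 r^{2n}}{\sum_m a_{m,\kappa}^2 r^{2m}}$ with $r=|\Phi_\kappa^{-u}(w)|$, using: $r\to\rho_\kappa$ uniformly on compacta (the modulus $|\Phi_\kappa^{-u}(w)|$ extends continuously in $u$ to $|u|=\rho_\kappa$, where it equals $\rho_\kappa$ for all $w$); $\sum_m a_{m,\kappa}^2\rho_\kappa^{2m}=\infty$; and the consecutive ratios $\frac{a_{n+1,\kappa}^2}{a_{n,\kappa}^2}\,r^2=\frac{1-n\kappa}{n+1}\,r^2$ tending to $1$ as $n\to\infty$ when $r=\rho_\kappa$ (the case $\kappa=0$ being entirely analogous). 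Thus no finite block of terms can retain a positive fraction of the divergent total, which forces the largest weight to vanish; the verification uses only the explicit product formula for $a_{n,\kappa}$.

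Granting $m_u(w)\to 0$ and $\E|X_0|^2=1<\infty$, the Lindeberg condition for the triangular array $\{X_n\psi_n^u(w)\}_{n\ge 0}$ is immediate, since
$$
\sum_{n\ge 0}\E\big[|X_n\psi_n^u(w)|^2\,\mathbf{1}_{\{|X_n\psi_n^u(w)|>\eps\}}\big]\ \le\ Q_\kappa(w,w)\;\E\big[|X_0|^2\,\mathbf{1}_{\{|X_0|>\eps/m_u(w)\}}\big]\ \longrightarrow\ 0 .
$$
The Lindeberg--Feller theorem, applied via the Cram\'er--Wold device to real linear combinations of the real and imaginary parts of $g_u(w_1),\dots,g_u(w_k)$, then shows that $(g_u(w_1),\dots,g_u(w_k))$ converges in distribution to the centered complex Gaussian vector with covariance $\big(Q_\kappa(w_i,w_j)\big)_{i,j}$ — the symmetry assumption (\ref{eq:unitvariance}) being exactly what forces $\E[X_0^2]=0$ and hence the circularly symmetric limit, which is the law of $(f_\kappa(\boldsymbol{Z},w_1),\dots,f_\kappa(\boldsymbol{Z},w_k))$. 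Combined with tightness this yields $g_u\Rightarrow f_\kappa(\boldsymbol{Z},\cdot)$ in $H(\mathbb{U}(0,\rho_\kappa))$, and the theorem follows. I expect the negligibility estimate $m_u(w)\to 0$ to be the main obstacle: it is the quantitative form of the principle that pushing the observation window to the boundary of $\mathbb{U}(0,\rho_\kappa)$ makes infinitely many coefficients contribute on comparable footing, and the central limit theorem — which carries the whole substance of the argument, the covariance having already been pinned down by the conformal reduction — rests on it; by contrast, the reduction itself is algebraic and the continuity of the zero map is standard.
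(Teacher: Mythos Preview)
Your proposal is correct and follows essentially the same architecture as the paper: normalize by the factor $\Delta_\kappa^{\pm u}$ so that the covariance is pinned at $Q_\kappa$ independently of $u$, prove convergence of finite-dimensional distributions via Lindeberg--Feller, upgrade to convergence in $H(\mathbb{U}(0,\rho_\kappa))$ (the paper quotes a lemma of Valko--Vir\'ag, you argue tightness directly from the uniform second-moment bound plus Montel, which is the same content), and then read off convergence of the zero statistics by Hurwitz/continuous mapping. The only substantive difference is in the verification of the CLT hypothesis: the paper checks a Lyapunov-type condition $\sum_n|\alpha_{n,\kappa}(u)|^4\to 0$ by summing the generating function $\sum_n a_{n,\kappa}^2 |\Phi_\kappa^u(z)|^{2n}\zeta^n/|\Delta_\kappa^u(z)|^2$ in closed form and applying dominated convergence on the unit circle, whereas you verify Lindeberg directly from the negligibility $\max_n|\psi_n^u(w)|\to 0$, which you extract from the divergence of $\sum_m a_{m,\kappa}^2 r^{2m}$ at $r=\rho_\kappa$ together with the ratio asymptotics --- both routes require only the finite second moment of $X_0$ and yield the same conclusion, the paper's being more computational and yours more qualitative.
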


\begin{figure}
$$
\begin{minipage}{4cm}
\begin{tikzpicture}
\draw (0,0) circle (2cm);
\draw[dashed] (0.85,0.85) circle (0.5cm);
\fill (0.51,0.886) circle (0.75mm);
\fill (0.987,0.712) circle (0.75mm);
\fill (0.63,1.173) circle (0.75mm);
\draw[->,line width=1mm] (0.85,0.85) .. controls +(-0.2,0) and +(0.1,0.2) .. (0,0);
\fill[white] (0.85,0.85) circle (1.5mm);
\draw (0.85,0.85) node[] {$u$};
\end{tikzpicture}
\end{minipage}
\qquad 
\begin{minipage}{1.5cm}
\begin{tikzpicture}
\draw (0,0) node {\Large $\stackrel{\Phi_{\kappa}^u}{\boldsymbol{\longrightarrow}}$};
\end{tikzpicture}
\end{minipage}
\qquad
\begin{minipage}{4cm}
\begin{tikzpicture}
\draw (0,0) circle (2cm);
\draw[dashed] (0,0) circle (1cm);
\begin{scope}[xscale=2,yscale=2,xshift=-0.85cm,yshift=-0.85cm]
\fill (0.51,0.886) circle (0.5mm);
\fill (0.987,0.712) circle (0.5mm);
\fill (0.63,1.173) circle (0.5mm);
\end{scope}
\draw (0,0) node {$0$};
\end{tikzpicture}
\end{minipage}
$$
\caption{\label{fig:1}
To study the zeroes in a neighborhood of a point $u$ near the boundary of $\mathbb{U}(0,\rho_{\kappa})$, take the  image under the map $\Phi_{\kappa}^u$,
which maps $u$ to $0$,
and consider the positions of the zeroes under this mapping.
This is described in Remark \ref{rem:1}.
}
\end{figure}
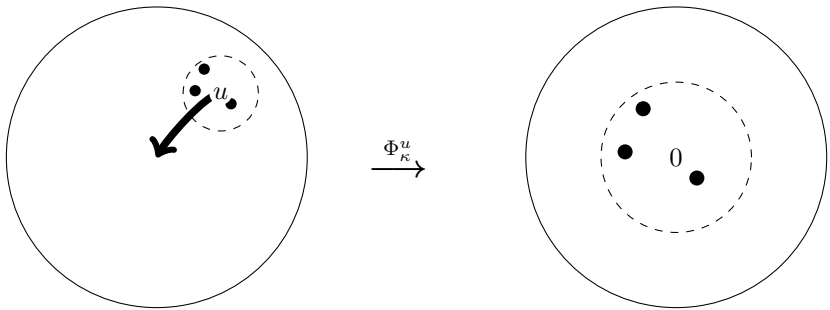

Since $Q_{\kappa}$ is covariant with respect to the mappings $\Phi_{\kappa}^u$,
and since the distribution of a Gaussian process is determined by its covariance, the zeros of the GAF, $\{\xi\, :\, f_{\kappa}(\boldsymbol{Z},\xi)=0\}$
is a stationary point process with respect to these mappings.

\begin{remark} \label{rem:1}
The mapping $\Phi_{\kappa}^u$ was defined so that $\Phi_{\kappa}^u(u)=0$.
Therefore, mapping the zeroes by $\Phi_{\kappa}^u$ maps the zeroes in a neighborhood
of $u$ to a neighborhood of $0$.
We have an illustration in Figure \ref{fig:1} to indicate this.
The test function $\varphi$ is nonzero only in a window around $0$.
\end{remark}

There is some interest in also considering random polynomials with real coefficients.
This motivates considering random real analytic functions.

\begin{theorem}
\label{thm:second}
Suppose that $X_0,X_1,\dots$ are i.i.d., real-valued random variables with mean zero
and variance 1.
Let $Z_0,Z_1,\dots$ be i.i.d., real-valued standard, normal random variables.
For each $\kappa\leq 0$, and any continuous function $\varphi$ whose support is a compact subset of $\mathbb{U}(0,\rho_{\kappa})$,
$$
\sum_{\xi\, :\, f_{\kappa}(\boldsymbol{X},\xi)=0} \varphi(\Phi_{\kappa}^{r}(\xi))
\quad \text {converges in distribution to} \quad 
\sum_{\xi\, :\, f_{\kappa}(\boldsymbol{Z},\xi)=0} \varphi(\xi)\, ,
$$
in the limit $|r| \to \rho_{\kappa}$, along any sequence satisfying the constraint $r \in \R$.
\end{theorem}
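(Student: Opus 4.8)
The real case is not a formal consequence of Theorem~\ref{thm:main} — real random variables violate (\ref{eq:unitvariance}) — but it yields to exactly the same strategy, which I now outline.

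The first step is a deterministic change of variables. If $\xi$ is a zero of $f_\kappa(\boldsymbol{X},\cdot)$, then $z=\Phi_\kappa^r(\xi)$ is a zero of $z\mapsto f_\kappa(\boldsymbol{X},\Phi_\kappa^{-r}(z))$, since $\Phi_\kappa^{-r}$ inverts $\Phi_\kappa^r$; and since $\Delta_\kappa^{-r}$ is nowhere zero on $\mathbb{U}(0,\rho_\kappa)$, the renormalized function
$$
\widehat{f}_r(\boldsymbol{X},z)\ :=\ \Delta_\kappa^{-r}(z)^{-1}\,f_\kappa\big(\boldsymbol{X},\Phi_\kappa^{-r}(z)\big)\, ,\qquad |r|<\rho_\kappa\, ,
$$
has the same zero set as $z\mapsto f_\kappa(\boldsymbol{X},\Phi_\kappa^{-r}(z))$, so that $\sum_{\xi:f_\kappa(\boldsymbol{X},\xi)=0}\varphi(\Phi_\kappa^{r}(\xi))=\sum_{z:\widehat{f}_r(\boldsymbol{X},z)=0}\varphi(z)$. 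By the covariance property of $Q_\kappa$ under the maps $\Phi_\kappa^u$ together with (\ref{eq:covariance}), one has $\mathbf{E}[\widehat{f}_r(\boldsymbol{X},z)\overline{\widehat{f}_r(\boldsymbol{X},w)}]=Q_\kappa(z,w)$, the covariance of the GAF $f_\kappa(\boldsymbol{Z},\cdot)$; and because $r$, the $X_m$, $\Phi_\kappa^{-r}$ and $\Delta_\kappa^{-r}$ are all real, $\widehat{f}_r(\boldsymbol{X},\cdot)$ obeys the reflection identity $\overline{\widehat{f}_r(\boldsymbol{X},\bar z)}=\widehat{f}_r(\boldsymbol{X},z)$, just like the real GAF $f_\kappa(\boldsymbol{Z},\cdot)$. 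In particular the Taylor expansion at the origin, $\widehat{f}_r(\boldsymbol{X},z)=\sum_{n\ge0}b_n(r)z^n$, has real coefficients $b_n(r)=\sum_{m\ge0}c_{n,m}(r)X_m$ with real weights, and the covariance identity forces $\sum_m c_{n,m}(r)c_{n',m}(r)=a_{n,\kappa}^2\,\delta_{n,n'}$ for every $r$.

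Next I would prove a finite-dimensional central limit theorem: for fixed $N$, the real vector $(b_0(r),\dots,b_N(r))$ converges in distribution, as $|r|\to\rho_\kappa$ with $r\in\R$, to $(a_{0,\kappa}Z_0,\dots,a_{N,\kappa}Z_N)$. Since the covariance of $\sum_m X_m\big(c_{0,m}(r),\dots,c_{N,m}(r)\big)$ is already the correct diagonal matrix for every $r$, a multivariate Lindeberg--Feller central limit theorem applies provided the delocalization $\delta_N(r):=\max_{0\le n\le N}\sup_m|c_{n,m}(r)|\to0$ holds; indeed the bounded total variance $\sum_{n\le N}a_{n,\kappa}^2$ and the finite variance of the $X_m$ then force the Lindeberg condition, and the limit is a \emph{real} Gaussian vector precisely because the $X_m$ are real. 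I then upgrade this to weak convergence of $\widehat{f}_r(\boldsymbol{X},\cdot)$ to $f_\kappa(\boldsymbol{Z},\cdot)$ in the space of analytic functions on $\mathbb{U}(0,\rho_\kappa)$ with the topology of local uniform convergence: from $\mathbf{E}|\widehat{f}_r(\boldsymbol{X},z)|^2=Q_\kappa(z,z)$ and a Cauchy estimate on a slightly larger circle one gets, for $\rho<\rho'<\rho_\kappa$, a bound $\mathbf{E}\sup_{|z|\le\rho}|\widehat{f}_r(\boldsymbol{X},z)|^2\le C(\rho,\rho')\,Q_\kappa(\rho',\rho')$ uniform in $r$, hence tightness by Montel's theorem; tightness together with the convergence of Taylor coefficients gives the claimed weak convergence. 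Finally, the map sending an analytic function $g\not\equiv0$ to its zero set, viewed as a locally finite point measure, is continuous from the local uniform topology to the vague topology: on any bounded open $U$ with $\overline U\subset\mathbb{U}(0,\rho_\kappa)$ on whose boundary $g$ does not vanish, Hurwitz's theorem says every nearby $g'$ has the same number of zeros in $U$, with multiplicity, converging to those of $g$. Composing with the vaguely continuous functional $\mu\mapsto\int\varphi\,d\mu$ and using that $f_\kappa(\boldsymbol{Z},\cdot)\not\equiv0$ almost surely, the continuous mapping theorem turns the weak convergence of the previous paragraph into the statement of the theorem.

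The only genuinely model-dependent ingredient, and the step I expect to be the main obstacle, is the delocalization estimate $\delta_N(r)\to0$. Writing out $\widehat{f}_r(\boldsymbol{X},z)=(1+\kappa r^2)^{-1/(2\kappa)}\sum_{m\ge0}a_{m,\kappa}X_m\,(z+r)^m(1-\kappa r z)^{1/\kappa-m}$ and reading off $c_{n,m}(r)$, one must show that these weights spread out over infinitely many indices $m$ with a vanishing maximum as $|r|\to\rho_\kappa$; this hinges on the asymptotics of the Taylor coefficients of $z\mapsto(1-\kappa r z)^{1/\kappa-m}$ against the factorial-type growth of $a_{m,\kappa}^2=\prod_{j=1}^{m}\frac{1-(j-1)\kappa}{j}$, and is where the real work lies. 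Once this estimate is established, the real-coefficient bookkeeping and the Hurwitz/continuous-mapping argument run word for word as in the complex case of Theorem~\ref{thm:main}, the sole difference being that the limiting Gaussian coefficients are real.
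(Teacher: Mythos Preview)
Your overall architecture is exactly the paper's: reduce to a CLT for finite-dimensional marginals of the renormalized function $\widehat f_r$, upgrade to weak convergence in the local-uniform topology (the paper quotes the Valko--Vir\'ag lemma for this, which packages your Cauchy-estimate/Montel tightness argument), and then transfer to zeros by Hurwitz and the continuous mapping theorem. Your real-coefficient bookkeeping (the reflection identity, the diagonal covariance $\sum_m c_{n,m}(r)c_{n',m}(r)=a_{n,\kappa}^2\delta_{n,n'}$, the observation that the limiting Gaussians come out real) is correct and is precisely the ``only difference'' the paper mentions when it says the proof of Theorem~\ref{thm:second} is identical to that of Theorem~\ref{thm:main}.

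The one substantive divergence is in \emph{which} finite-dimensional marginals you use for the CLT, and this is exactly where you leave a gap. You work with the Taylor coefficients $(b_0(r),\dots,b_N(r))$ and reduce to the delocalization estimate $\delta_N(r)\to 0$, which you do not prove and flag as ``where the real work lies.'' The paper instead takes function values at finitely many points $z_1,\dots,z_N$, writes $\sum_k \lambda_k\,\widehat f_r(z_k)=\sum_n \alpha_{n}(r)X_n$, and verifies Lyapunov's condition $\sum_n|\alpha_n(r)|^4\to 0$ directly: by Cauchy's formula this $\ell^4$-sum equals a circle average of $\big|\sum_n a_{n,\kappa}^2|\Phi_\kappa^r(z)|^{2n}|\Delta_\kappa^r(z)|^{-2}\zeta^n\big|^2$, and the inner generating series sums in closed form to a covariant factor that tends to $0$ pointwise on $C(0,1)\setminus\{1\}$ as $|r|\to\rho_\kappa$, so dominated convergence finishes it. This is short and avoids any asymptotic analysis of the double-index weights $c_{n,m}(r)$. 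Your route is not wrong, but the unproved delocalization is precisely the heart of the argument; switching to function values as in Lemma~\ref{lem:clt} turns that ``main obstacle'' into a two-line computation.
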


\section{Proof of the Main Result}

The proof is elementary.
It uses several tools from probability theory and complex analysis such as the Lindeberg-Feller
condition and Hurwitz's theorem.
Since these are well-known to probabilists, these results were merely referred to in an implicit
way in the version of our paper \cite{LMS} in order to shorten the presentation.
But here, we will also briefly review those tools.
(In an earlier version of our paper, which is available in preprint form, we did also
include these tools.)

We transcribe  the Lindeberg-Feller version of the central limit theorem
from Durrett's textbook \cite{Durrett}.
(See page 110 in the latest, online version.)
\begin{theorem}[Lindeberg Feller Theorem]
Suppose that for each $n$, there is a sequence of real random variables $(X_{n,m})$
such that for a fixed $n$ these random variables are independent (for different
$m$ indices) and ${\bf E}X_{n,m}=0$ for all $m$. Suppose
\begin{itemize}
\item[(i)] $\sum_{m=1}^{\infty} {\bf E} X_{n,m}^2 \to \sigma^2 > 0$ as $n \to \infty$,
\item[(ii)] For all $\epsilon>0$, $\lim_{n \to \infty} {\bf E}(|X_{n,m}|^2 ; |X_{n,m}|>\epsilon) = 0$.
\end{itemize}
Then $S_n = \sum_{m=1}^{\infty} X_{n,m}$ converges in distribution to 
$\sigma \chi$, where $\chi$ is a standard normal (real) random variable.
\end{theorem}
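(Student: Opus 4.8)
The plan is to prove the theorem by the method of characteristic functions together with L\'evy's continuity theorem, which reduces the assertion to a pointwise limit for products of one-dimensional characteristic functions. Write $\phi_{n,m}(t)=\E[e^{itX_{n,m}}]$ and $\sigma_{n,m}^2=\E[X_{n,m}^2]$, so that the characteristic function of $S_n$ is $\prod_m \phi_{n,m}(t)$, the infinite product converging because the partial sums of $S_n$ converge in $L^2$ by~(i) (so the series defines a bona fide random variable). It suffices to show $\prod_m \phi_{n,m}(t)\to e^{-\sigma^2 t^2/2}$ for each fixed real $t$.

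First I would extract from hypothesis~(ii) the uniform asymptotic negligibility (\emph{Feller}) condition $\max_m \sigma_{n,m}^2\to 0$: for any $\epsilon>0$ one has $\sigma_{n,m}^2\le \epsilon^2+\E[|X_{n,m}|^2;\,|X_{n,m}|>\epsilon]$, and the second term is small once $n$ is large, where throughout (ii) is read in its summed form $\sum_m \E[|X_{n,m}|^2;\,|X_{n,m}|>\epsilon]\to 0$. This also gives $\sum_m \sigma_{n,m}^4\le\big(\max_m \sigma_{n,m}^2\big)\sum_m \sigma_{n,m}^2\to 0$. Next I would Taylor-expand: from $|e^{ix}-1-ix+\tfrac{x^2}{2}|\le\min(|x|^2,|x|^3/6)$, applied pointwise and integrated, write $\phi_{n,m}(t)=1-\tfrac12 t^2\sigma_{n,m}^2+\delta_{n,m}(t)$ with $\delta_{n,m}(t)=\E[\,e^{itX_{n,m}}-1-itX_{n,m}+\tfrac12 t^2X_{n,m}^2\,]$, and split the expectation at $|X_{n,m}|=\epsilon$ to get
$$
\sum_m|\delta_{n,m}(t)|\ \le\ \tfrac16\,|t|^3\epsilon\sum_m\sigma_{n,m}^2\ +\ t^2\sum_m\E\big[|X_{n,m}|^2;\,|X_{n,m}|>\epsilon\big],
$$
which tends to $0$ if one lets $n\to\infty$ first (using~(i) for the first term and~(ii) for the second) and then $\epsilon\to0$.

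The last step is a telescoping comparison of three products, all legitimate because every factor has modulus at most $1$ once $n$ is large (here one uses $|\prod z_m-\prod w_m|\le\sum_m|z_m-w_m|$ for complex numbers of modulus $\le 1$). Pass from $\prod_m\phi_{n,m}(t)$ to $\prod_m\big(1-\tfrac12 t^2\sigma_{n,m}^2\big)$, with error at most $\sum_m|\delta_{n,m}(t)|\to0$; then to $\prod_m e^{-t^2\sigma_{n,m}^2/2}$, with error at most $\sum_m\big(\tfrac12 t^2\sigma_{n,m}^2\big)^2\le\tfrac14 t^4\sum_m\sigma_{n,m}^4\to0$ by $|e^{-x}-(1-x)|\le x^2$ for $0\le x\le 1$; and finally note $\prod_m e^{-t^2\sigma_{n,m}^2/2}=\exp\!\big(-\tfrac12 t^2\sum_m\sigma_{n,m}^2\big)\to e^{-\sigma^2 t^2/2}$ by~(i). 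L\'evy's continuity theorem then yields $S_n\Rightarrow\sigma\chi$.

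The main obstacle is not any individual estimate but the bookkeeping around the truncation level $\epsilon$: the error bounds must be arranged so that the iterated limit ($n\to\infty$ first, then $\epsilon\to0$) is valid, and the Lindeberg condition must be used in its summed form $\sum_m\E[|X_{n,m}|^2;\,|X_{n,m}|>\epsilon]\to0$, which is the statement actually needed and how (ii) should be understood. A minor preliminary point is to confirm that $S_n$ is well defined and that its characteristic function equals the infinite product above; this follows from the $L^2$ convergence of the partial sums furnished by~(i).
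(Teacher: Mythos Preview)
Your proof is correct and follows the standard characteristic-function route to the Lindeberg--Feller theorem (Taylor expansion of $\phi_{n,m}$, truncation at level $\epsilon$, telescoping product comparison, and L\'evy's continuity theorem). You have also correctly identified and addressed the typographical omission in condition~(ii): the Lindeberg condition must be the summed form $\sum_m \E[|X_{n,m}|^2;\,|X_{n,m}|>\epsilon]\to 0$, and your argument uses it in exactly that way.

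As for comparison with the paper: there is nothing to compare. The paper does not prove this theorem at all; it is transcribed from Durrett's textbook and the author states explicitly, ``We will not prove these results, which are available in \cite{Durrett}.'' The theorem is quoted only as a tool feeding into Corollary~\ref{cor:moment} and Lemma~\ref{lem:clt}. So you have supplied a complete proof where the paper simply cites one.
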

Durrett also lists an exercise in his textbook to derive ``Lyapunov's theorem''
from this. (It is Exercise 3.4.12 in the most recent version.)
Let us also state this corollary.

\begin{corollary}
\label{cor:moment}
Suppose that $X_0,X_1,\dots$ are i.i.d., complex valued random variables with mean zero,
satisfying condition (\ref{eq:unitvariance}).
Suppose that $(\alpha_{n,k})$ is a complex valued set of numbers, for $n \in \N$
and $k \in \{0,1,\dots \}$,
satisfying
\begin{itemize}
\item[(a)] $\frac{1}{2} \sum_{k=0}^{\infty} |\alpha_{n,k}|^2 \to \sigma^2 > 0$ as $n \to \infty$, and
\item[(b)] $\sum_{k=0}^{\infty} |\alpha_{n,k}|^p \to 0$ as $n \to \infty$ for some $p\geq 0$.
\end{itemize}
Then $S_n = \sum_{k=1}^{\infty} \textrm{Re}[\alpha_{n,k} X_k]$ converges in distribution to 
$\sigma \chi$.
\end{corollary}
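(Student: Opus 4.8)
The plan is to apply the Lindeberg--Feller theorem to the triangular array $X_{n,k} := \textrm{Re}[\alpha_{n,k} X_k]$, $k \in \{0,1,\dots\}$. For each fixed $n$ these are independent (the $X_k$ being independent), and ${\bf E}X_{n,k} = \textrm{Re}[\alpha_{n,k}\,{\bf E}X_k] = 0$. Writing $\alpha_{n,k} = a_{n,k} + i b_{n,k}$ and $X_k = U_k + i V_k$, one has $X_{n,k} = a_{n,k}U_k - b_{n,k}V_k$, so using that $X_k$ has mean zero and, by (\ref{eq:unitvariance}) together with the unit-variance normalization, ${\bf E}U_k^2 = {\bf E}V_k^2 = \tfrac12$ and ${\bf E}[U_kV_k] = 0$, I would compute
$$
{\bf E}X_{n,k}^2\, =\, \tfrac12\big(a_{n,k}^2 + b_{n,k}^2\big)\, =\, \tfrac12 |\alpha_{n,k}|^2\, .
$$
Summing over $k$ and invoking hypothesis (a) gives $\sum_k {\bf E}X_{n,k}^2 \to \sigma^2 > 0$, which is condition (i) of the Lindeberg--Feller theorem; in particular $\sum_k {\bf E}X_{n,k}^2 < \infty$ for each $n$, so the series defining $S_n$ converges almost surely and $S_n$ is well defined.

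The substance is the Lindeberg condition (ii), namely that $\sum_k {\bf E}\big[|X_{n,k}|^2 ; |X_{n,k}| > \epsilon\big] \to 0$ for every $\epsilon > 0$. The only moment information available is the unit variance of the $X_k$, so a crude Lyapunov estimate ${\bf E}|X_{n,k}|^p \leq |\alpha_{n,k}|^p\,{\bf E}|X_0|^p$ is useless when $p > 2$, since ${\bf E}|X_0|^p$ may be infinite. Instead I would use hypothesis (b) only to extract \emph{uniform smallness} of the coefficients: $\max_k |\alpha_{n,k}|^p \leq \sum_k |\alpha_{n,k}|^p \to 0$, hence $M_n := \max_k |\alpha_{n,k}| \to 0$. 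Since $|X_{n,k}| \leq |\alpha_{n,k}|\,|X_k| \leq M_n |X_k|$, the event $\{|X_{n,k}| > \epsilon\}$ is contained in $\{|X_k| > \epsilon/M_n\}$, so
$$
\sum_k {\bf E}\big[|X_{n,k}|^2 ; |X_{n,k}| > \epsilon\big]\ \leq\ \Big(\sum_k |\alpha_{n,k}|^2\Big)\, {\bf E}\big[|X_0|^2 ; |X_0| > \epsilon/M_n\big]\, .
$$
By (a) the prefactor stays bounded, and since ${\bf E}|X_0|^2 = 1 < \infty$ while $\epsilon/M_n \to \infty$, the tail expectation tends to $0$ by dominated convergence; this is (ii). (For all large $n$, $M_n > 0$ by (a); for any $n$ with $M_n = 0$ the sum $S_n$ vanishes identically, so no issue.)

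With (i) and (ii) verified, the Lindeberg--Feller theorem yields that $S_n = \sum_k X_{n,k}$ converges in distribution to $\sigma\chi$, with $\chi$ a standard real normal random variable, which is the assertion. I expect the truncation step in the verification of (ii) --- trading the unavailable higher moments of $X_0$ for the smallness $M_n \to 0$ furnished by (b) --- to be the only genuinely substantive point; the remainder is the elementary variance computation and the routine check, via square-summability and (a), that every series in play converges. A minor formal remark: Durrett's statement is phrased for an array $(X_{n,m})_{m\geq 1}$, so the countably infinite index set here is already covered.
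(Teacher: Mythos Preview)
Your argument is correct. The paper does not actually give a proof of this corollary: it is presented as Lyapunov's theorem and the reader is referred to Durrett (Exercise~3.4.12), so there is no in-paper argument to compare against. Your verification of the Lindeberg condition is the expected one, and you are right that hypothesis (b) cannot be cashed in as a Lyapunov $(2+\delta)$-moment bound since only ${\bf E}|X_0|^2<\infty$ is assumed; using (b) solely to force $\max_k|\alpha_{n,k}|\to 0$ and then controlling the truncated second moment uniformly via the i.i.d.\ structure and dominated convergence is exactly the right route, and it is what the application in Lemma~\ref{lem:clt} (which assumes no higher moments on the $X_n$) requires.
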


We will not prove these results, which are available in \cite{Durrett}.
However, we will use Lyapunov's condition for the Lindeberg-Feller theorem
to prove:

\begin{lemma}
\label{lem:clt}
Suppose that $X_0,X_1,\dots$ are i.i.d., complex-valued random variables with mean zero
and satisfying (\ref{eq:unitvariance}). 
Let $\boldsymbol{Z} = (Z_0,Z_1,\dots)$ be i.i.d., complex Gaussians with density $\pi^{-1} e^{-|z|^2/2}$
on the complex plane.
Then for any $N \in \N$, any $z_1,\dots,z_N \in \mathbb{U}(0,\rho_{\kappa})$ and any $\lambda_1,\dots,\lambda_N \in \C$, the random variables 
$$
\sum_{k=1}^{N} \lambda_k \frac{f_{\kappa}(\boldsymbol{X},\Phi_{\kappa}^u(z_k))}{\Delta_{\kappa}^u(z_k)}
$$
converge in distribution, to the random variable $\sum_{k=1}^{N} \lambda_k f_{\kappa}(\boldsymbol{Z},z_k)$,
as $|u| \to \rho_{\kappa}$,
\end{lemma}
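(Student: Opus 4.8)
The plan is to deduce the lemma from the Cram\'er--Wold device together with Corollary~\ref{cor:moment}. Write $S_u:=\sum_{k=1}^{N}\lambda_k f_\kappa(\boldsymbol X,\Phi_\kappa^u(z_k))/\Delta_\kappa^u(z_k)$ and $T:=\sum_{k=1}^{N}\lambda_k f_\kappa(\boldsymbol Z,z_k)$. First I would expand each power series and interchange the (absolutely convergent) sums, obtaining
\[
S_u\ =\ \sum_{n=0}^{\infty}a_{n,\kappa}\Big(\sum_{k=1}^{N}\lambda_k\,\frac{\Phi_\kappa^u(z_k)^n}{\Delta_\kappa^u(z_k)}\Big)X_n\ =:\ \sum_{n=0}^{\infty}\alpha_n(u)\,X_n .
\]
Since convergence in distribution of the $\C$-valued $S_u$ to $T$ is equivalent, by Cram\'er--Wold, to $\operatorname{Re}[\overline{\beta}\,S_u]\to\operatorname{Re}[\overline{\beta}\,T]$ in distribution for every $\beta\in\C$, I would fix $\beta\neq 0$ (the case $\beta=0$ being trivial), set $\alpha_n'(u):=\overline{\beta}\,\alpha_n(u)$ so that $\operatorname{Re}[\overline{\beta}\,S_u]=\sum_{n\ge 0}\operatorname{Re}[\alpha_n'(u)X_n]$, and then verify hypotheses (a) and (b) of Corollary~\ref{cor:moment} for the array $(\alpha_n'(u))_n$ as $|u|\to\rho_\kappa$. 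Here $T$ is a centered complex Gaussian with $\E[T^2]=0$ (the $Z_n$ being circularly symmetric), so $\operatorname{Re}[\overline{\beta}\,T]$ is a centered real Gaussian with variance $\tfrac12|\beta|^2\,\E|T|^2$ by \eqref{eq:unitvariance}; this is the value that hypothesis (a) must produce.

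Hypothesis (a) I expect to hold exactly, with no limiting procedure, by a covariance computation. Using $Q_\kappa(z,w)=\sum_{n\ge 0}a_{n,\kappa}^2 z^n\overline{w}^n$ (which is \eqref{eq:covariance}) and then the covariance identity $Q_\kappa(\Phi_\kappa^u(z),\Phi_\kappa^u(w))=\Delta_\kappa^u(z)\overline{\Delta_\kappa^u(w)}\,Q_\kappa(z,w)$ recorded in the set-up, one gets
\[
\sum_{n=0}^{\infty}|\alpha_n'(u)|^2\ =\ |\beta|^2\sum_{k,l}\frac{\lambda_k\overline{\lambda_l}}{\Delta_\kappa^u(z_k)\overline{\Delta_\kappa^u(z_l)}}\,Q_\kappa\!\big(\Phi_\kappa^u(z_k),\Phi_\kappa^u(z_l)\big)\ =\ |\beta|^2\sum_{k,l}\lambda_k\overline{\lambda_l}\,Q_\kappa(z_k,z_l)\ =\ |\beta|^2\,\E|T|^2 ,
\]
which is independent of $u$. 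Thus $\tfrac12\sum_n|\alpha_n'(u)|^2\equiv\operatorname{Var}(\operatorname{Re}[\overline{\beta}\,T])$, giving (a) (if $\E|T|^2=0$ then $T\equiv 0$ and every $\alpha_n(u)=0$, so the lemma is vacuous; hence assume $\E|T|^2>0$).

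For hypothesis (b) I would pick any $p>2$ and use $\sum_n|\alpha_n'(u)|^p\le\big(\sup_n|\alpha_n'(u)|\big)^{p-2}\sum_n|\alpha_n'(u)|^2=|\beta|^p\big(\sup_n|\alpha_n(u)|\big)^{p-2}\E|T|^2$, reducing (b) to proving $\sup_n|\alpha_n(u)|\to 0$ as $|u|\to\rho_\kappa$. By the triangle inequality over the finitely many $k$ it suffices to show, for each $k$, that $\sup_n a_{n,\kappa}|\Phi_\kappa^u(z_k)|^n\big/|\Delta_\kappa^u(z_k)|\to 0$. Setting $r=r(u):=|\Phi_\kappa^u(z_k)|<\rho_\kappa$ (so $r\to\rho_\kappa$ as $|u|\to\rho_\kappa$, as recorded in the set-up) and using the covariance identity with $z=w=z_k$ to write $|\Delta_\kappa^u(z_k)|^2=\big(\sum_{m\ge 0}a_{m,\kappa}^2 r^{2m}\big)\big/Q_\kappa(z_k,z_k)$, this square becomes
\[
\frac{\big(\sup_n a_{n,\kappa}|\Phi_\kappa^u(z_k)|^n\big)^2}{|\Delta_\kappa^u(z_k)|^2}\ =\ Q_\kappa(z_k,z_k)\;\frac{\sup_n a_{n,\kappa}^2 r^{2n}}{\sum_{m\ge 0}a_{m,\kappa}^2 r^{2m}} .
\]
So the entire lemma comes down to one estimate: the largest term of the power series $\sum_n a_{n,\kappa}^2 r^{2n}$ is negligible compared with its sum as $r\uparrow\rho_\kappa$ --- equivalently, the maximal atom of the probability law $n\mapsto a_{n,\kappa}^2 r^{2n}\big/\sum_m a_{m,\kappa}^2 r^{2m}$ (a negative binomial law when $\kappa<0$, a Poisson law when $\kappa=0$) tends to $0$ as $r\uparrow\rho_\kappa$, i.e.\ as its variance diverges. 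This local-limit-theorem-type fact is the one genuinely non-formal step, and I expect it to be the main obstacle. It can be settled by Stirling's formula applied to $a_{n,\kappa}^2=|\kappa|^n\Gamma(n+1/|\kappa|)\big/\big(n!\,\Gamma(1/|\kappa|)\big)$ (respectively $a_{n,0}^2=1/n!$), which in fact shows the ratio is $\asymp\rho_\kappa^2-r^2$ when $0<|\kappa|<1$; when $|\kappa|\ge 1$ (so $\rho_\kappa\le 1$) it is immediate, since then $a_{n,\kappa}^2 r^{2n}$ is decreasing in $n$, whence the ratio equals $(1+\kappa r^2)^{-1/\kappa}\to 0$.

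Finally, with (a) and (b) in hand, Corollary~\ref{cor:moment} gives, along every sequence $u_m$ with $|u_m|\to\rho_\kappa$, that $\operatorname{Re}[\overline{\beta}\,S_{u_m}]$ converges in distribution to a centered real Gaussian of variance $\tfrac12|\beta|^2\,\E|T|^2$, that is, to $\operatorname{Re}[\overline{\beta}\,T]$. As $\beta\in\C$ was arbitrary, the Cram\'er--Wold device then yields $(\operatorname{Re}S_u,\operatorname{Im}S_u)\to(\operatorname{Re}T,\operatorname{Im}T)$ in distribution as $|u|\to\rho_\kappa$, which is the assertion of the lemma.
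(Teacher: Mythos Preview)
Your argument is correct and follows the same overall architecture as the paper's proof: expand $S_u=\sum_n\alpha_n(u)X_n$, verify condition~(a) of Corollary~\ref{cor:moment} exactly via the covariance identity for $Q_\kappa$, and then check Lyapunov's condition~(b). You are also more explicit than the paper in invoking Cram\'er--Wold to pass from the real statement of Corollary~\ref{cor:moment} to convergence of the complex variable $S_u$.

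The genuine difference is in how condition~(b) is established. The paper takes $p=4$ and uses a Parseval identity to write
\[
\sum_{n\ge 0} a_{n,\kappa}^4\,\frac{|\Phi_\kappa^u(z)|^{4n}}{|\Delta_\kappa^u(z)|^4}
\;=\;\frac{1}{2\pi}\int_0^{2\pi}\Bigl|\sum_{n\ge 0} a_{n,\kappa}^2\,\frac{|\Phi_\kappa^u(z)|^{2n}}{|\Delta_\kappa^u(z)|^2}\,e^{in\theta}\Bigr|^2 d\theta,
\]
then sums the inner series in closed form as $(1+\kappa|z|^2)^{1/\kappa}\bigl((1+\kappa|\Phi_\kappa^u(z)|^2\zeta)/(1+\kappa|\Phi_\kappa^u(z)|^2)\bigr)^{1/\kappa}$ (with the obvious exponential analogue for $\kappa=0$) and applies dominated convergence on the circle. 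Your route instead uses the elementary bound $\sum_n|\alpha_n|^p\le(\sup_n|\alpha_n|)^{p-2}\sum_n|\alpha_n|^2$ to reduce everything to the single statement that the maximal atom of the negative binomial (or Poisson, for $\kappa=0$) law $n\mapsto a_{n,\kappa}^2 r^{2n}/Q_\kappa(r,r)$ tends to $0$ as $r\uparrow\rho_\kappa$, which you then dispatch by Stirling (or by monotonicity when $|\kappa|\ge 1$). The paper's approach avoids Stirling entirely and exploits the closed form of $Q_\kappa$ more directly; yours is more probabilistic in spirit and makes transparent \emph{why} Lyapunov holds---no single coefficient dominates because the relevant weight distribution delocalises---at the cost of a slightly heavier asymptotic computation in the regime $0<|\kappa|<1$.
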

%

\begin{proof}
We may write
$$
\sum_{k=1}^{N} \lambda_k \frac{f_{\kappa}(\boldsymbol{X},\Phi_{\kappa}^u(z_k))}{\Delta_{\kappa}^u(z_k)}\,
=\, \sum_{k=1}^{N} \frac{\lambda_k}{\Delta_{\kappa}^u(z_k)} \sum_{n=0}^{\infty} a_{n,\kappa} X_n \left(\Phi_{\kappa}^u(z_k)\right)^n\,
=\, \sum_{n=0}^{\infty} \alpha_{n,\kappa}(u) X_n\, ,
$$
where we have left the dependence of $\alpha_{n,\kappa}(u)$ on $\lambda_1,\dots,\lambda_n$ and $z_1,\dots,z_n$ implicit for the coefficients
$$
\alpha_{n,\kappa}(u)\, =\, a_{n,\kappa} \sum_{k=1}^{N} \frac{\lambda_k \left(\Phi_{\kappa}^u(z_k)\right)^n}{\Delta_{\kappa}^u(z_k)}\, .
$$
Since $Q_{\kappa}$ is {\em covariant} with respect to the transformations $\Phi_{\kappa}^u$, this implies that
$$
\sum_{n=0}^{\infty} |\alpha_{n,\kappa}(u)|^2\,
=\, \sum_{j,k=1}^{N} \lambda_j Q_{\kappa}(z_j,z_k) \overline{\lambda}_k\, .
$$
For the same reason the variance of the random variables in question is a constant function of $u$.

To apply the Lindeberg-Feller conditions, we need to check Lyapunov's condition for the sequence $\alpha_{n,k}(u)$.
We note that for $p=4$
$$
\sum_{n=0}^{\infty} |\alpha_{n,\kappa}(u)|^4\,
\leq \, N^4 \max_{k=1,\dots,N} |\lambda_k|^4 
\sum_{n=0}^{\infty} a_{n,\kappa}^4 \frac{|\Phi_{\kappa}^u(z_k)|^{4n}}{|\Delta_{\kappa}^u(z_k)|^4}\, .
$$
Cauchy's integral formula implies
$$
\sum_{n=0}^{\infty} a_{n,\kappa}^4 \frac{|\Phi_{\kappa}^u(z)|^{4n}}{|\Delta_{\kappa}^u(z)|^4}\,
=\, \frac{1}{2\pi i} \oint_{C(0,1)} \left|\sum_{n=0}^{\infty} a_{n,\kappa}^2 \frac{|\Phi_{\kappa}^u(z)|^{2n}}{|\Delta_{\kappa}^u(z)|^2}\, \zeta^n\right|^2\,  d\zeta\, ,
$$
for each fixed $z \in \mathbb{U}(0,\rho_{\kappa})$.
For $\kappa<0$ the series sums to
$$
\sum_{n=0}^{\infty} a_{n,\kappa}^2 \frac{|\Phi_{\kappa}^u(z)|^{2n}}{|\Delta_{\kappa}^u(z)|^2}\, \zeta^n\,
=\, 
(1+\kappa |z|^2)^{1/\kappa} \left(\frac{1+\kappa |\Phi_{\kappa}^u(z)|^2 \zeta}{1+\kappa |\Phi_{\kappa}^u(z)|^2}\right)^{1/\kappa}\, .
$$
The second factor on the right hand side has norm bounded by 1 for all $\zeta \in C(0,1)$.
Moreover since $|\Phi_{\kappa}^u(z)|$ converges to $\rho_{\kappa} = |\kappa|^{-1/2}$ in the limit $|u| \to \rho_{\kappa}$,
the second factor converges pointwise to $0$ in that limit,  for every $\zeta \in C(0,1) \setminus \{1\}$.
For $\kappa=0$ the series sums to
$$
e^{|z|^2} \exp\left((\zeta-1)|\Phi_{\kappa}^u(z)|^2\right)\, .
$$
But for $\kappa=0$, we know $\rho_0=\infty$ and $|\Phi_{0}^u(z)| \to \infty$ in the limit $|u| \to \infty$.
Since the real part of $(\zeta-1)$ is non-positive, the same conclusion follows.
In either case, the dominated convergence gives the desired result.
\end{proof}

Lemma \ref{lem:clt} implies that the random analytic functions
$[\Delta_{\kappa}^u(z)]^{-1} f_{\kappa}(\boldsymbol{X},\Phi_{\kappa}^u(z))$ converge in distribution to the random analytic function $f_{\kappa}(\boldsymbol{Z},z)$,
in the limit $|u| \to \rho_{\kappa}$, in the sense that the finite dimensional marginals of the function values converge.
This also implies convergence in distribution of the zero sets.
We will write $\Rightarrow$ for convergence in distribution. 
A clear and elegant proof of this fact has been provided by Valko and Vir\'ag in a recent paper
they wrote on random Schr\"odinger operators \cite{ValkoVirag}.

\begin{lemma}[Valko and Vir\'ag, 2010]
\label{lem:ValkoVirag}
Let $f_n(\omega,z)$ be a sequence of random analytic functions on a domain $D$ (which is open, connected and simply connected)
such that ${\bf E} h(|f_n(z)|) < g(z)$ for some increasing unbounded function $h$
and a locally bounded function $g$. Assume that $f_n(z) \Rightarrow f(z)$ in the sense of 
finite dimensional distributions. Then $f$ has a unique analytic version and 
$f_n \Rightarrow f$ in distribution with respect to local-uniform convergence. 
\end{lemma}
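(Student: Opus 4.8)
The plan is to read the statement as a weak-convergence statement on the Polish space $\mathcal{O}(D)$ of analytic functions on $D$, topologized by uniform convergence on compact subsets, and to separate a soft part from the genuinely analytic part. For the soft part I would appeal to Prokhorov's theorem: $f_n\Rightarrow f$ with respect to local-uniform convergence follows once one knows (a) that $\{\mathrm{Law}(f_n)\}_n$ is tight in $\mathcal{O}(D)$, and (b) that all subsequential weak limits agree. For (b), note that each evaluation $e_z\colon g\mapsto g(z)$ is continuous on $\mathcal{O}(D)$ and that, fixing a countable dense set $\{\zeta_j\}\subset D$, the map $g\mapsto(g(\zeta_j))_j$ is a continuous injection (by the identity theorem, $D$ being connected); hence the Borel $\sigma$-field of $\mathcal{O}(D)$ is generated by the evaluations, and a Borel probability measure on $\mathcal{O}(D)$ is determined by its finite-dimensional marginals. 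Granting tightness, every subsequence of $\{\mathrm{Law}(f_n)\}$ then has a further subsequence converging weakly to the law of some $\mathcal{O}(D)$-valued random element, which by the assumed finite-dimensional convergence has the same marginals as the prescribed field $f$; this at once exhibits an analytic version of $f$, shows it is unique in law, and shows the full sequence converges. So everything reduces to tightness.

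For tightness I would use Montel's theorem, which characterises the relatively compact subsets of $\mathcal{O}(D)$ as those that are uniformly bounded on each compact subset of $D$. Exhausting $D$ by compacta $K_1\subset\mathrm{int}(K_2)\subset\cdots$, it suffices to prove that for every compact $K\subset D$,
$$
\lim_{M\to\infty}\ \sup_{n}\ \mathbf{P}\!\Big(\sup_{z\in K}|f_n(z)|>M\Big)\, =\, 0\, ;
$$
indeed, given $\varepsilon>0$, choosing $M_j$ with $\sup_n\mathbf{P}(\sup_{K_j}|f_n|>M_j)<\varepsilon 2^{-j}$ makes $\{g:\sup_{K_j}|g|\le M_j\text{ for all }j\}$ relatively compact and captures every $f_n$ with probability at least $1-\varepsilon$.

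To prove the displayed estimate I would bring in analyticity through subharmonicity. Fix $\rho>0$ so that $K_\rho:=\{z:\dist(z,K)\le\rho\}$ is still a compact subset of $D$. Since $\log^{+}|f_n|=\max(\log|f_n|,0)$ is subharmonic on $D$, the sub-mean-value inequality over the disc $\mathbb{U}(z,\rho)$ gives, for $z\in K$,
$$
\log^{+}|f_n(z)|\ \le\ \frac{1}{\pi\rho^{2}}\int_{\mathbb{U}(z,\rho)}\log^{+}|f_n(w)|\,dA(w)\ \le\ \frac{1}{\pi\rho^{2}}\,I_n\,,\qquad I_n:=\int_{K_\rho}\log^{+}|f_n(w)|\,dA(w)\,,
$$
so that $\sup_{K}|f_n|\le e^{I_n/(\pi\rho^{2})}$, and it remains to show the nonnegative variables $\{I_n\}$ are tight. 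This is the one place the moment hypothesis is used. By Markov's inequality $\mathbf{P}(|f_n(w)|>t)\le g(w)/h(t)\le(\sup_{K_\rho}g)/h(t)$, which tends to $0$ as $t\to\infty$ uniformly in $n$ and in $w\in K_\rho$, because $h$ is increasing and unbounded and $g$ is locally bounded; hence, by Fubini, the expected Lebesgue area of the super-level set $\{w\in K_\rho:\log^{+}|f_n(w)|>t\}$ is at most $|K_\rho|(\sup_{K_\rho}g)/h(e^{t})\to0$. Splitting $I_n$ at a high threshold level and combining this with Markov's inequality then yields $\sup_n\mathbf{P}(I_n>M)\to0$ as $M\to\infty$, i.e.\ tightness of $\{I_n\}$. (In the application to Lemma~\ref{lem:clt} one has $h(t)=t^{2}$, for which this final step is immediate.)

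I expect the real obstacle to be exactly this last passage---converting pointwise data, namely convergence of finite-dimensional marginals together with a pointwise moment bound, into uniform-in-$n$ control of $\sup_K|f_n|$. The bridge is the rigidity of analytic functions: a pointwise-bounded family is automatically locally uniformly bounded (Montel), and the probabilistic shadow of this is the reduction, via subharmonicity of $\log^{+}|f_n|$, of $\sup_K|f_n|$ to the integral $I_n$ of a quantity whose tails are small uniformly in $n$ and in the base point. Everything else---the Polish-space generalities, Prokhorov's theorem, and the fact that finite-dimensional marginals determine a law on $\mathcal{O}(D)$---is standard soft analysis.
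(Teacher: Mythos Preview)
The paper does not give its own proof of this lemma; it is simply quoted from Valko--Vir\'ag \cite{ValkoVirag} and used as a black box. So there is no paper proof to compare against, and your proposal is being assessed on its own merits.

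Your overall architecture is the correct one, and is indeed the standard route: view $\mathcal{O}(D)$ as a Polish space, reduce weak convergence to tightness plus identification of subsequential limits, identify limits via the fact that evaluations generate the Borel $\sigma$-field (identity theorem), and obtain tightness from Montel's theorem together with a uniform-in-$n$ bound on $\sup_K|f_n|$ in probability. The passage from a pointwise moment bound to control of $\sup_K|f_n|$ via subharmonicity of $\log^{+}|f_n|$ and the sub-mean-value inequality is exactly the right analytic input.

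The one soft spot is the last sentence before your parenthetical: ``splitting $I_n$ at a high threshold level and combining this with Markov's inequality then yields $\sup_n\mathbf{P}(I_n>M)\to0$.'' For a completely general increasing unbounded $h$ this step is not automatic. From $\mathbf{E}\,h(|f_n(w)|)\le g(w)$ you get that $H_n:=\int_{K_\rho}h(|f_n(w)|)\,dA(w)$ has expectation bounded by $\int_{K_\rho}g<\infty$, hence $\{H_n\}$ is tight; but to pass from tightness of $H_n$ to tightness of $I_n=\int_{K_\rho}\log^{+}|f_n|\,dA$ you implicitly need $\log t\lesssim h(t)$ for large $t$, which fails if, say, $h(t)=\log\log t$. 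Knowing only that the superlevel sets $\{w:\log^{+}|f_n(w)|>t\}$ have small expected area does not by itself bound $I_n$, since $\log^{+}|f_n|$ can be large on that small set. You are right to hedge here, and your parenthetical is the honest resolution: in the application one has $h(t)=t^{2}$ (the variance is exactly $Q_\kappa(z,z)$), for which $\log^{+}t\le h(t)$ trivially and $\{I_n\}$ is tight by $I_n\le H_n$. For the lemma in the generality stated you would either need a mild extra hypothesis on $h$ (e.g.\ $h(t)\ge c\log t$ eventually) or a sharper argument exploiting subharmonicity more fully; as written, that final step is a gap in the general case but not in the case the paper actually uses.
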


Because of this result we see that $[\Delta_{\kappa}^u(z)]^{-1} f_{\kappa}(\boldsymbol{X},\Phi_{\kappa}^u(z))$ converges
in distribution to $f_{\kappa}(\boldsymbol{Z},z)$, with respect to the local uniform convergence.
To converge in distribution
with respect to local-uniform convergence means that for any function $\mathcal{F}$ of $f$ which is continuous
with respect to the local-uniform topology, the random variables $\mathcal{F}(f_n)$
converge in distribution to $\mathcal{F}(f)$.
If $\varphi$ is a continuous function, compactly supported on $\mathbb{U}(0,\rho_{\kappa})$,
then defining the function
$$
\mathcal{N}_{\varphi}(f)\, 
=\, \sum_{\xi\, :\, f(\xi)=0} \varphi(\xi)\, ,
$$
this is continuous with respect to the local-uniform topology.
For the reader's convenience we paraphrase Hurwitz's theorem from Saks and Zygmund \cite{SaksZygmund},
page 158.

\begin{theorem}[Hurwitz's theorem]
If a sequence $(h_n(z))$ of functions, continuous on a closed set 
$\mathcal{K}$ and holomorphic in the interior of $\mathcal{K}$,
is uniformly convergent on this set, and if the function
$h(z) = \lim_{n \to \infty} h_n(z)$ vanishes nowhere on the boundary
of the set $\mathcal{K}$, then, beginning from a certain value of $n$,
all the functions $h_n(z)$ have in the interior of $\mathcal{K}$
the same number of roots as the function $h(z)$ (counting every root 
as many times as its multiplicity indicates).
\end{theorem}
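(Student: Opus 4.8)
\medskip
\noindent\textbf{Proof plan.}
The plan is to reduce the statement to the argument principle, in the form of Rouch\'e's theorem, applied to the pair $h$ and the perturbation $h_n = h + (h_n - h)$. First I would use compactness: since $\mathcal{K}$ is closed and bounded, $h$ is continuous on $\mathcal{K}$, and $h$ is zero-free on $\partial\mathcal{K}$, the number $m = \min_{z \in \partial\mathcal{K}} |h(z)|$ is strictly positive. The hypothesis that $h_n \to h$ uniformly on $\mathcal{K}$ then produces an $N$ with $\sup_{z \in \mathcal{K}} |h_n(z) - h(z)| < m$ for all $n \geq N$; in particular every $h_n$ with $n \geq N$ is itself zero-free on $\partial\mathcal{K}$, and on that boundary
$$
|h_n(z) - h(z)| < m \leq |h(z)|\, .
$$
Rouch\'e's theorem applied to this inequality is precisely the desired conclusion: $h$ and $h_n$ have the same number of zeros in the interior of $\mathcal{K}$, counting every root as many times as its multiplicity indicates.

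Before invoking Rouch\'e I would record the finiteness that makes these counts meaningful. Because $h$ is continuous on $\mathcal{K}$ and nonzero on $\partial\mathcal{K}$, its zero set is a closed, hence compact, subset of the open interior of $\mathcal{K}$; moreover $h$ cannot vanish identically on a component $V$ of $\mathrm{int}\,\mathcal{K}$, since the boundary of such a $V$ (taken in $\C$) lies in $\partial\mathcal{K}$, where $h\neq 0$. Thus these zeros are isolated, so there are finitely many of them, and the same holds for each $h_n$ with $n \geq N$.

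The one delicate point --- and the step I expect to be the main obstacle --- is that the textbook versions of the argument principle and of Rouch\'e's theorem presuppose holomorphy on a neighborhood of the closed region, whereas here $h$ and $h_n$ are merely continuous up to $\partial\mathcal{K}$ and holomorphic only in the open interior, and $\mathcal{K}$ is an arbitrary compact set. I would circumvent this by exhausting the interior. Choose a compact set $\mathcal{K}'$ with piecewise smooth boundary satisfying $\mathcal{K}' \subset \mathrm{int}\,\mathcal{K}$ and containing all the (finitely many) zeros of $h$ in its interior; this is possible because those zeros lie at positive distance from $\partial\mathcal{K}$. On the compact collar $\mathcal{K} \setminus \mathrm{int}\,\mathcal{K}'$ the function $h$ is continuous and zero-free, hence $|h| \geq \delta > 0$ there; enlarging $N$ so that also $\sup_{z \in \mathcal{K}} |h_n(z) - h(z)| < \delta$, no $h_n$ with $n \geq N$ vanishes on that collar either. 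On a genuine open neighborhood of $\mathcal{K}'$ both functions are holomorphic, so the classical Rouch\'e theorem applies on $\mathcal{K}'$ and gives equality of the zero counts in $\mathrm{int}\,\mathcal{K}'$; since no zeros of $h$ or of $h_n$ lie in the collar, these equal the zero counts in $\mathrm{int}\,\mathcal{K}$, which completes the argument. If one is content to assume that $\partial\mathcal{K}$ is a finite union of rectifiable Jordan curves, one may instead quote directly the boundary version of the argument principle and skip the exhaustion; and a purely topological variant replaces all of this by the observation that $h \circ \gamma$ and $h_n \circ \gamma$ are uniformly close loops in $\C \setminus \{0\}$ for a parametrization $\gamma$ of $\partial\mathcal{K}$, hence homotopic there and of equal winding number about the origin, the winding number being the zero count --- but that identification requires the same care at the boundary.
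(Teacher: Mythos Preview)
The paper does not prove Hurwitz's theorem at all: it merely paraphrases the statement from Saks and Zygmund (page 158) and then uses it as a black box to deduce that $\mathcal{N}_{\varphi}$ is continuous for the local-uniform topology. There is therefore no ``paper's own proof'' to compare your proposal against.

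Your argument is the standard one (Rouch\'e via an exhaustion to avoid the boundary-regularity issue) and is sound. One small remark: you write ``since $\mathcal{K}$ is closed and bounded,'' but the quoted statement only says ``closed.'' In Saks and Zygmund's formulation $\mathcal{K}$ is indeed compact, and your proof uses compactness essentially (for the existence of $m>0$ and for the finiteness of the zero set), so you should either note explicitly that compactness is being assumed or, if you want to match the literal hypothesis, observe that the conclusion as stated is vacuous unless the zero count is finite, which forces the relevant components to be bounded anyway.
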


From this theorem, it is easy to see that $\mathcal{N}_{\varphi}$ is a continuous function.
This follows from the usual method of approximation by simple functions based on disks.

Note that $\Delta_{\kappa}^u(z)$ is finite and non-vanishing for $z \in \mathbb{U}(0,\rho_{\kappa})$.
Therefore 
$$
\mathcal{N}_{\varphi}\left( [\Delta_{\kappa}^u(z)]^{-1} f_{\kappa}(\boldsymbol{X},\Phi_{\kappa}^u(z)) \right)
\, =\, \mathcal{N}_{\varphi}\left( f_{\kappa}(\boldsymbol{X},\Phi_{\kappa}^u(z)) \right)\, .
$$
Moreover, $f_{\kappa}(\boldsymbol{X},\Phi_{\kappa}^u(z))=0$ means
that $z = \Phi_{\kappa}^{-u}(w)$ for some $w$ such that $f_{\kappa}(\boldsymbol{X},w)=0$.
Therefore, we see that convergence of the distributions of
$\mathcal{N}_{\varphi}\left( f_{\kappa}(\boldsymbol{X},\Phi_{\kappa}^u(z)) \right)$ really means
that
$$
\sum_{\xi\, :\, f_{\kappa}(\boldsymbol{X},\xi)=0} \varphi(\Phi_{\kappa}^{u}(\xi))
\quad \text {converges in distribution to} \quad 
\sum_{\xi\, :\, f_{\kappa}(\boldsymbol{Z},\xi)=0} \varphi(\xi)\, ,
$$
in the limit $|u| \to \rho_{\kappa}$, as claimed.

\section{An Application}

Recently there has been some interest in looking at {\em all}
random polynomials of a given degree with coefficients in $\{+1,-1\}$.
John Baez reported on this on his blog,
and there was a popular article on the topic with numerical results
obtained by Sam Derbyshire \cite{Baez}.
We can write such polynomials as
$$
p_n(X_0,\dots,X_n;z)\, =\, \sum_{k=0}^n X_k z^k\, ,\quad 
\text { where } \quad 
X_0,\dots,X_n \in \{+1,-1\}\, .
$$
In Figure \ref{fig:roots}, we have plotted a representation of the zeros for $n=13$.
For any fixed sequence $X_0,X_1,\dots \in \{+1,-1\}$, it is easy to see that
$$
p_n(X_0,\dots,X_n;z) \to f_{-1}(\boldsymbol{X},z)\, ,\quad \text { locally, uniformly}
$$
as $n \to \infty$.
In particular the zero sets converge.

\begin{figure}[]
$$
\begin{tikzpicture}[xscale=0.66,yscale=0.66]
\draw (0,0) node[above] {\includegraphics[height=4.1cm,width=4cm]{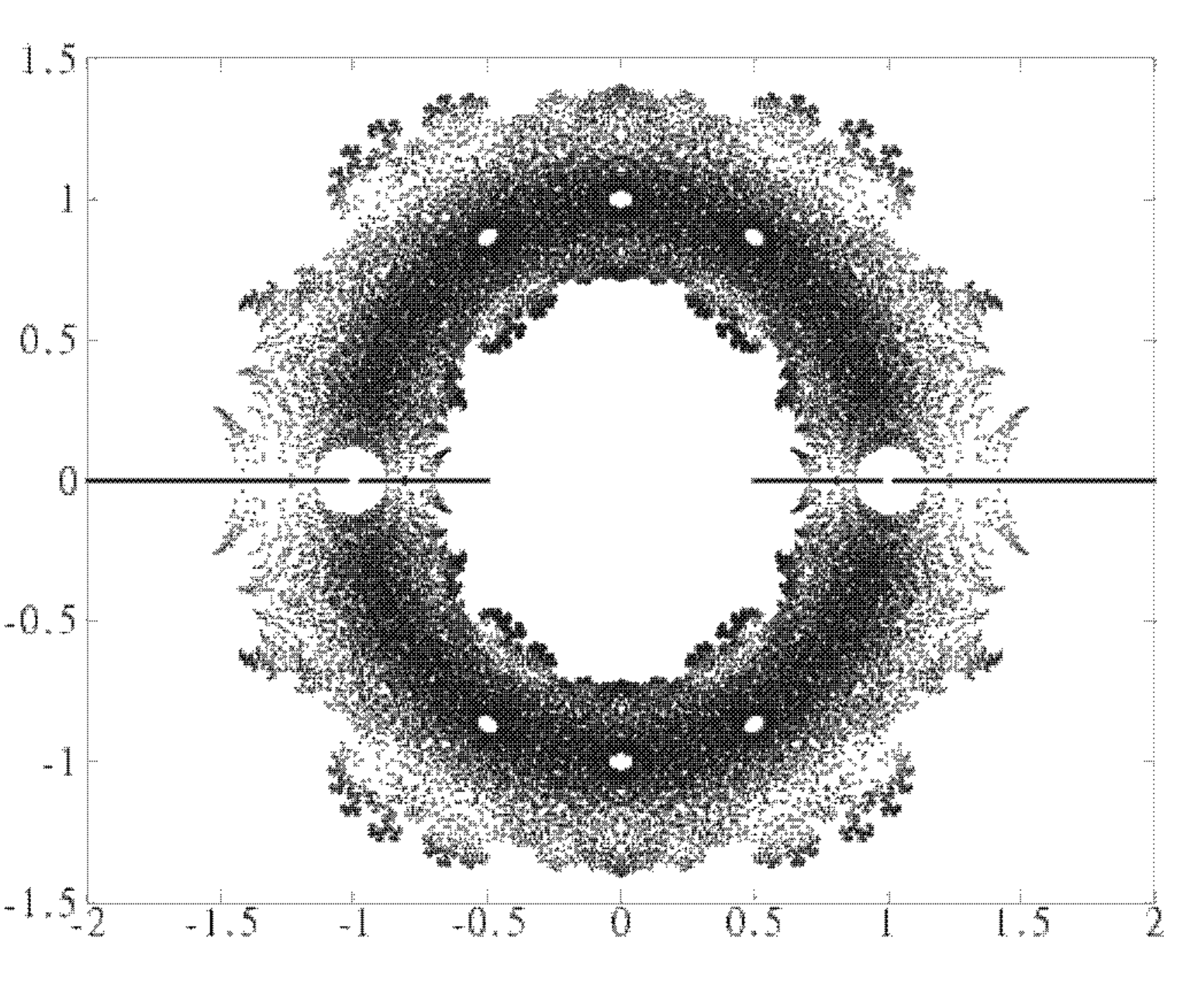}};
\draw (0,0) node[below] {(a) $\mathcal{Z}_{13}$};
\draw (7,0) node[above] {\includegraphics[height=4cm,width=4cm]{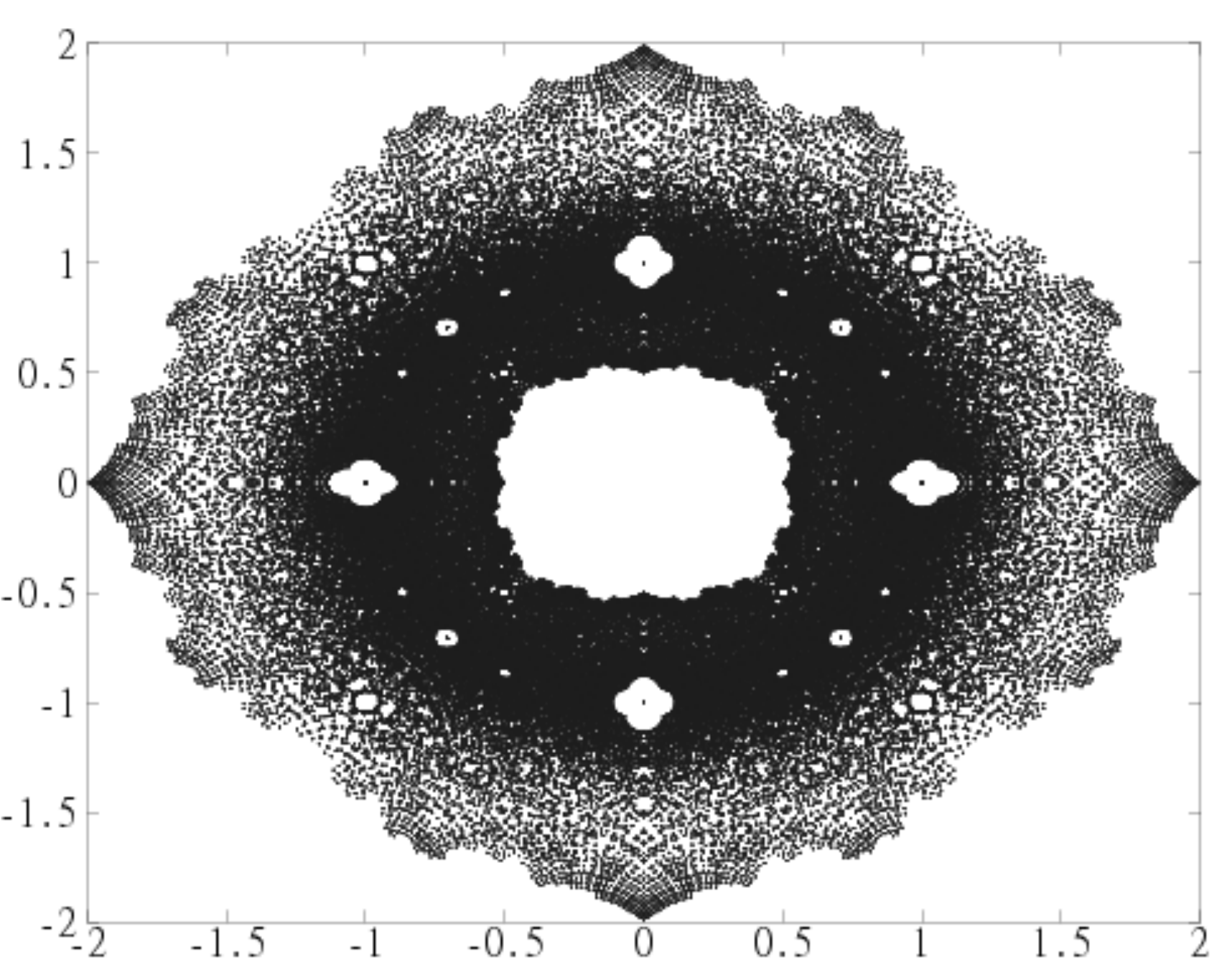}};
\draw (7,0) node[below] {(b) $\mathcal{W}_8$};
\end{tikzpicture}
$$
\caption{\label{fig:roots}
(a) This is the set of all $z \in \mathbb{C}$, such that $p_n(X_0,\dots,X_n;z)=0$
for some choice of $X_0,\dots,X_n \in \{+1,-1\}$, for $n=13$.
(b) The set of all roots for polynomials with coefficient $X_0,\dots,X_n \in \{1+i,1-i,-1+i,-1-i\}$
for $n=8$.
(The roots are computed numerically using Octave's root program. Better figures are available on
the arXiv version of this paper.)}
\end{figure}

Note that since the coefficients are strictly real, our main theorem does not apply.
But this is why we stated Theorem \ref{thm:second}.
The proof of Theorem \ref{thm:second} is identical to that of Theorem \ref{thm:main}.
The only difference is that the covariance changes.
Instead of having $E[f(\boldsymbol{X},z) f(\boldsymbol{X},w)]=0$, we have
$$
\overline{f(\boldsymbol{X},z)} = f(\boldsymbol{X},\overline{z})\, ,
$$
because all coefficients are real.
The M\"obius transformations $\Phi^r_{\kappa}$ for $r \in \R$ also preserve this property.
That is why we restricted to those isometries in the statement of Theorem \ref{thm:second}.

One can also consider the polynomials $p_n(X_0,\dots,X_n;z)$ where $X_0,\dots,X_n$ are i.i.d.,
uniform in the set $\{1+i,1-i,-1+i,-1-i\}$.
In this case, Theorem \ref{thm:main} applies.
The simplest non-Gaussian RAF is given by
$$
f_{-1}(\boldsymbol{X},z)\, =\, \sum_{n=0}^{\infty} X_n z^n\, ,
$$
where $X_0,X_1,\dots$ are i.i.d., random variable chosen from the set $\{1,-1,i,-i\}$ with equal probabilities.
Our theorem implies that the zero set of such a RAF near the unit circle is has a distribution which is close to that
of the corresponding GAF.

Let us define
$$
\mathcal{Z}_n\, =\, \bigcup_{X_0,\dots,X_n \in \{+1,-1\}} \{ z \in \C \, :\, p_n(X_0,\dots,X_n;z)=0\}\, ,
$$
and
$$
\mathcal{W}_n\, =\, \bigcup_{X_0,\dots,X_n \in \{1+i,1-i,-1+i,-1-i\}} \{ z \in \C \, :\, p_n(X_0,\dots,X_n;z)=0\}\, .
$$
Baez notes several ``holes'' in the sets $\mathcal{Z}_n$ centered at points on the unit circle, such as $1$
and $-1$.
From Figure \ref{fig:roots}, one also sees several holes in the set $\mathcal{W}_n$ for $n=8$
along the unit circle.
Due to our theorem, we can deduce that the holes in $\mathcal{Z}_n$ at $\pm 1$ must close up.
Similarly, all the holes in the set $\mathcal{W}_n$ along the unit circle must close in the limit 
$n \to \infty$.

For $Z_0,Z_1,\dots$ i.i.d., real-valued standard, normal random variables,
the zeros of $f_{-1}(\boldsymbol{Z},z)$ are asymptotically uniformly distributed along the unit circle
due to the result of Ibragimov and Zaporozhets, for example.
The intensity measure of $f_{-1}(\boldsymbol{X},z)$ is asymptotically close to that of $f_{-1}(\boldsymbol{Z},z)$, in the weak topology, near $\pm 1$.
Therefore, there cannot be a hole at $1$ or $-1$.
This is the simplest consequence of Theorem \ref{thm:second}.
Similarly, in the complex case.

One can also consider this from another perspective.
For a fixed value of $z$, we may define the set of function values for all possible coefficients
$$
C_n(z)\, =\, \{p_n((x_0,\dots,x_n),z)\, :\, x_0,\dots,x_n \in \{1,-1\}\}\, .
$$
This satisfies the recurrence  relation
$$
C_n(z)\, =\, \{1+z w\, :\, w \in C_{n-1}(z)\} \cup \{-1+zw\, :\, w \in C_{n-1}(z)\}\, .
$$
Defining the set
$$
C(z)\, =\, \{f_{-1}(\boldsymbol{y},z)\, :\, y_0,y_1,\dots \in \{1,-1\}\}\, ,
$$
one can see that $C(z) = \{1+zw\, :\, w \in C(z)\} \cup \{-1+zw\, :\, w \in C(z)\}$.
This implies the Hausdorff dimension satisfies the bound
$$
\operatorname{dim} C(z)\, \leq\, \frac{\log(2)}{\log(1/|z|)}\, .
$$
For $r \in \R$, we know that $C(r) \subseteq \R$.
Therefore, it seems reasonable to conjecture that 
$$
\operatorname{dim} C(z)\, =\, 
\begin{cases}
\min\{2,\log(2)/\log(1/|z|)\} & \text { for $z \in \C\setminus\R$,}\\
\min\{1,\log(2)/\log(1/|z|)\} & \text { for $z \in \R$.}
\end{cases}
$$
It is easy to see that $C(1/3)$ is the middle-thirds Cantor set;
whereas, $C(i/\sqrt{2})$ is the rectangle $\{x+iy\, :\, x \in [-2,2],\, y \in [-\sqrt{2},\sqrt{2}]\}$.
In Figure \ref{fig:2} (a) we have displayed $C_{15}(z)$ for $z=e^{i\pi/4}/\sqrt{2}$.

The fractal dimension of $C(z)$ may pertain to Baez's conjectures that the zero sets $\{z\, :\, 0 \in C_n(z)\}$
have a multi-fractal structure associated to $z$.
In analogy to the dimension of varieties of smooth curves,
one might guess that ``typically'' if $z$ satisfies $C(z) \ni 0$ then the set of $w$'s in a neighborhood
of $z$ with $C(w) \ni 0$ has dimension ``approximately'' equal to $\dim C(z)$.

\begin{figure}
$$
\begin{tikzpicture}
\draw (0,0) node[] {\includegraphics[height=5cm, width=5cm]{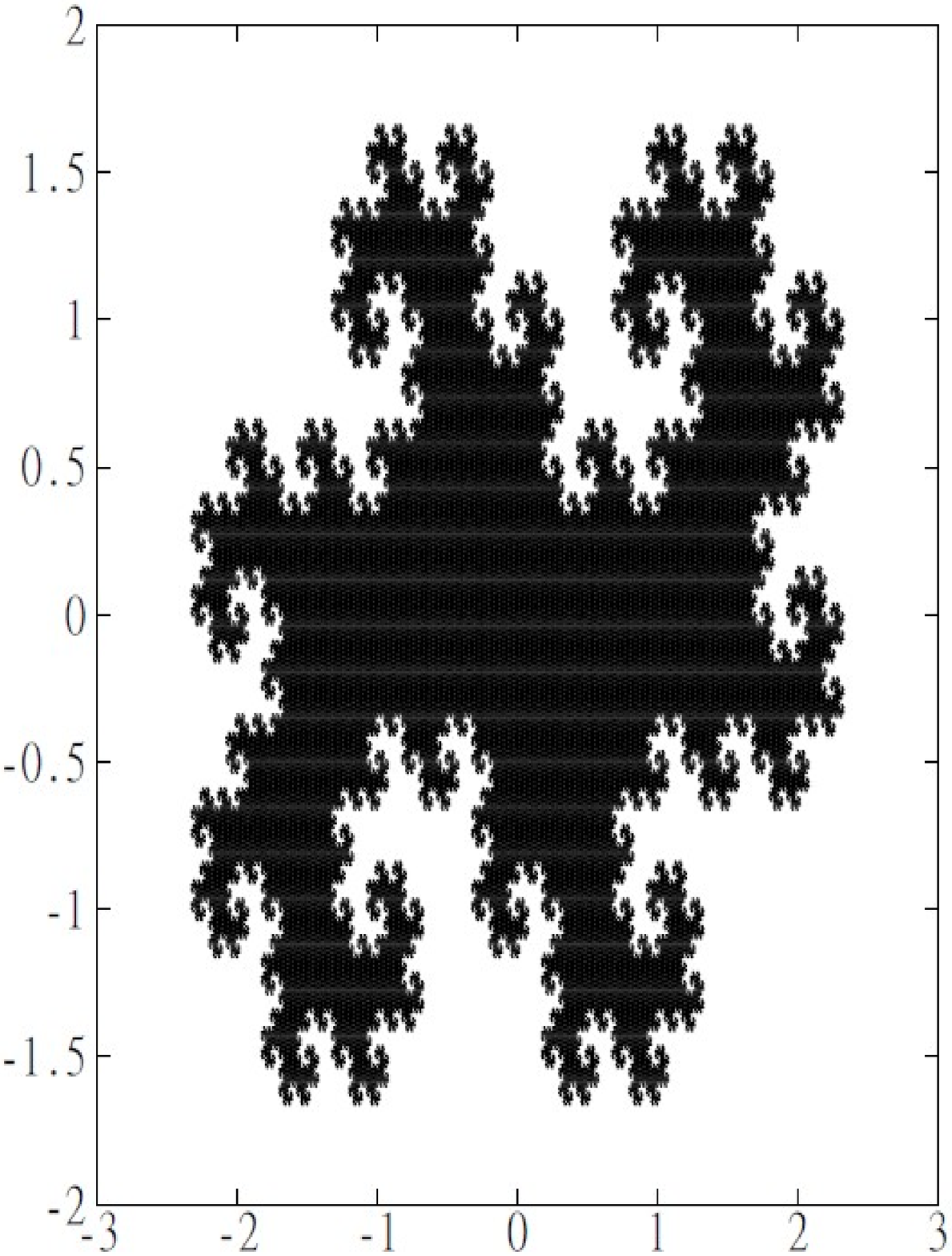}};
\draw (0,-3) node[below] {(a)};
\end{tikzpicture}
\hspace{0.75cm}
\begin{tikzpicture}
\draw (0,0) node[] {\includegraphics[height=5cm, width=5cm]{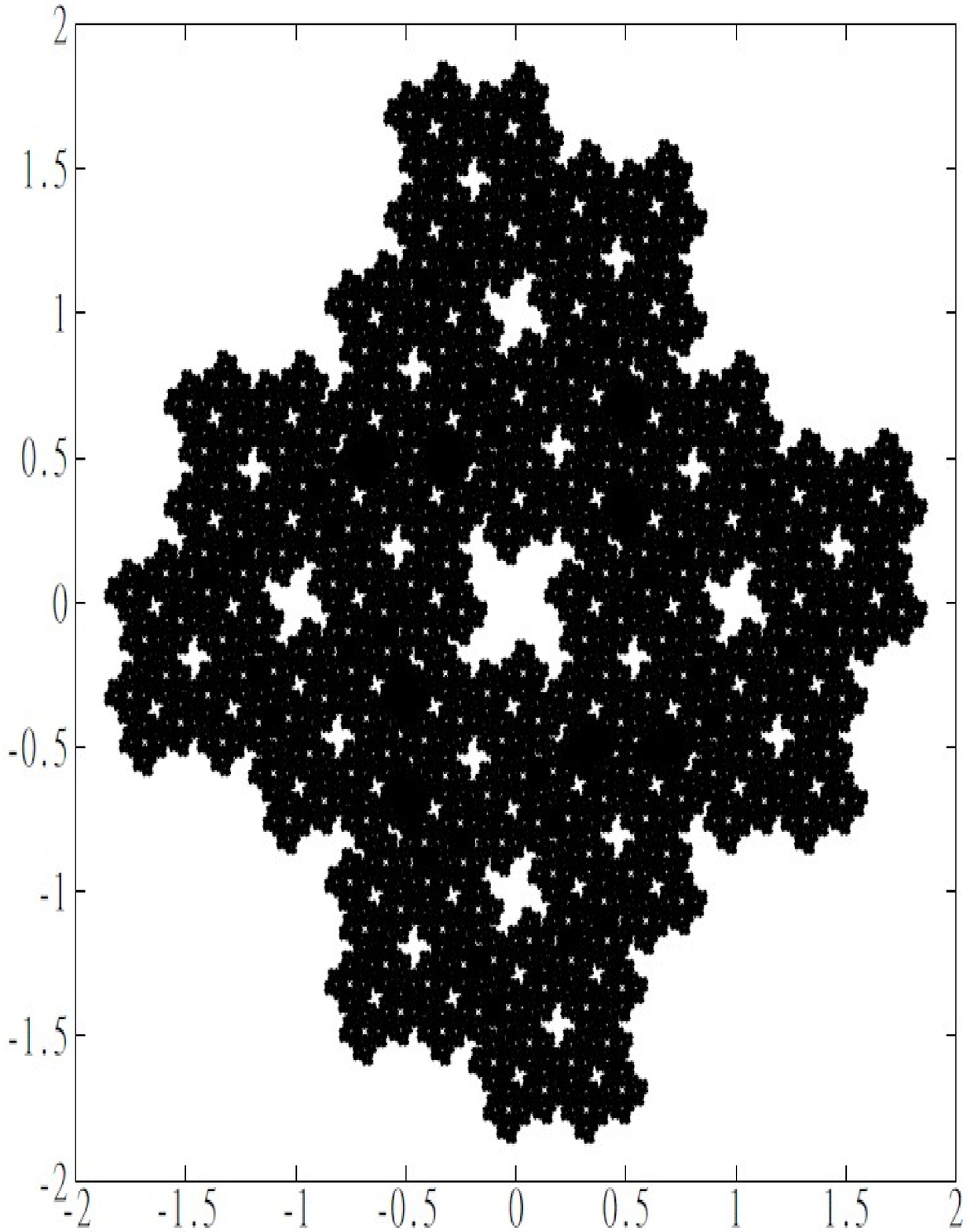}};
\draw (0,-3) node[below] {(b)};
\end{tikzpicture}
$$
\caption{\label{fig:2}
(a) The set $C_{15}(z)$ for $z=e^{i\pi/4}/\sqrt{2}$.
(b) The set $B_8(z)$ for $z = e^{i\pi/8}\sqrt{2}$.
(Better figures are available on the arXiv version of this paper.)}
\end{figure}

Similarly, defining
$$
B_n(z)\, =\, \left\{p_n((x_1,\dots,x_n),z)\, :\, x_0,x_1,\dots \in \{1,-1,i,-i\}\right\}\, ,
$$
we have $B_n(z) = \bigcup_{u \in \{1,-1,i,-i\}} \{u+zw\, :\, w \in B_{n-1}(z)\}$.
Defining 
$$
B(z)\, =\, \{f_{-1}(\boldsymbol{x},z)\, :\, x_0,x_1,\dots \in \{1,-1,i,-i\}\}\, ,
$$
this implies that $\operatorname{dim} B(z) \leq 2 \log(2)/\log(1/|z|)$.
It seems reasonable to guess that $\operatorname{dim} B(z) = \min\{2,2\log(2)/\log(1/|z|)\}$.
An easy calculation shows $B(1/2) = \{x+iy\, :\, x,y \in [-2,2]\}$.
In Figure \ref{fig:2} (b) we have plotted $B_8(z)$ for $z = e^{i\pi/8}/2$.

One can also ask, for $z$ such that $B(z) \ni 0$,
about the structure of the coefficients $\{(X_0,X_1,\dots)\, :\, X_0,X_1,\dots \in \{1,-1,i,-i\}\}$
satisfying $f_{-1}(\boldsymbol{X},z)=0$.
One guess is that as $|z| \to 1$, the distribution converges in some sense to ``uniform'' with the the density
appropriate for the intensity measure of the zeroes at that point.
This guess is affirmed for GAF's.
It is reasonable to conjecture that this is also true for RAF's with discrete coefficients.
An interesting possibility is to relate the correlations of zeros for $f_{-1}(\boldsymbol{X},z)$
for a typical value of $\boldsymbol{X}$,
to the structure of the coefficients $\boldsymbol{X}$ such that $f_{-1}(\boldsymbol{X},z)=0$
for a typical value of $z$.
This may be a topic for further study.

\bibliographystyle{amsplain}

\end{document}